\DeclareMathOperator{\Comm}{Comm}
\DeclareMathOperator{\Aut}{Aut}
\DeclareMathOperator{\HMod}{\mathcal{E}}
\DeclareMathOperator{\id}{id}
\newcommand{\free}{\mathbb{F}}
\newcommand{\Z}{\ensuremath{\mathbb{Z}}}
\newcommand{\sol}[1]{\widehat{#1}}\clearpage{}
\newtheorem{theorem}{Theorem}
\newtheorem*{mtheorem}{Main Theorem}
\newtheorem{lemma}[theorem]{Lemma}
\newtheorem*{corollary}{Corollary}
\theoremstyle{definition}
\newtheorem{definition}[theorem]{Definition}
\theoremstyle{remark}
\title[Hall's universal group in $\Comm(\free)$]{Hall's universal group is a subgroup of the abstract commensurator of a free group}
\author{Edgar A. Bering IV}
\address{Department of Mathematics, Technion---Israel Institute of Technology, Haifa, Israel}
\email{bering@campus.technion.ac.il}
\author{Daniel Studenmund}
\address{Department of Mathematical Sciences, Binghamton University, Binghamton, New York}
\email{daniel@math.binghamton.edu}
\thanks{EB was supported by the Azrieli Foundation.}
\subjclass[2020]{
20F28, 20F50}
\begin{document}

\begin{abstract}
P. Hall constructed a universal countable locally finite group $U$, determined up to isomorphism by two properties: every finite group $C$ is a subgroup of $U$, and every embedding
of $C$ into $U$ is conjugate in $U$. Every countable locally finite group is a subgroup of $U$. We prove that $U$ is a subgroup of the abstract commensurator of a finite-rank nonabelian free group.
\end{abstract}

\maketitle

\section{Introduction}

A group $G$ is \emph{locally finite} if every finitely generated subgroup is finite. In 1959, P. Hall constructed a  \emph{universal} countable locally finite group $U$~\cite{hall-universal}. Hall's group $U$ is the unique countable group such that every finite group embeds in $U$ and any two isomorphic finite subgroups of $U$ are conjugate in $U$. These properties imply that any countable locally finite group $L$ embeds in $U$~\cite{hall-universal}*{Lemma 4}.

Given a group $G$ the \emph{abstract commensurator} of $G$ is the collection of isomorphisms $\phi : H\to K$ between finite-index subgroups $H, K \leq G$, modulo agreement on a finite-index domain. If two groups $G$ and $H$ have a third group $K$ as a common finite-index subgroup then $G$ and $H$ are \emph{commensurable} and $\Comm(G) \cong \Comm(H)$.

Let $\free_k$ denote the free group of rank $k$. As free groups of any finite rank $k\geq 2$ are commensurable, we have $\Comm(\free_k)\cong \Comm(\free_2)$. We will refer to this group as $\Comm(\free)$. For all $k$ there is an embedding $\Aut(\free_k) < \Comm(\free)$~\cite{bartholdi-bogopolski}*{Lemma 2.3}. The permutations on a free basis provide an embedding $S_k < \Aut(\free_k)$, consequently $\Comm(\free)$ contains every finite group. Our main result implies a stronger result: any countable ascending chain of finite groups can be realized in $\Comm(\free)$.

\begin{mtheorem}
    Hall's universal group $U$ is a subgroup of $\Comm(\free)$.
\end{mtheorem} 

\begin{corollary}
    If $L$ is a countable locally finite group, then $L$ is a subgroup of $\Comm(\free)$.
\end{corollary}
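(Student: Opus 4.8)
Granting the Main Theorem stated above, the Corollary is immediate: the introduction records \cite{hall-universal}*{Lemma 4}, that every countable locally finite group $L$ embeds in $U$, and composing such an embedding with an embedding $U\hookrightarrow\Comm(\free)$ yields $L\hookrightarrow\Comm(\free)$. The substance is thus the Main Theorem, and I will sketch how I would embed $U$ into $\Comm(\free)$.

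My plan is to realize Hall's own presentation of $U$ as a direct limit inside $\Comm(\free)$. Write $U=\varinjlim(G_n,\iota_n)$ with $G_0=\Z/2$, $G_{n+1}=\operatorname{Sym}(G_n)$, and $\iota_n\colon G_n\hookrightarrow G_{n+1}$ the left-regular representation; it then suffices to produce an ascending chain of finite subgroups $A_0\le A_1\le\cdots$ of $\Comm(\free)$ together with isomorphisms $\phi_n\colon G_n\xrightarrow{\cong}A_n$ satisfying $\phi_{n+1}\circ\iota_n=\phi_n$, for then $\bigcup_n A_n\cong U$. The basic building block is the realization of a finite group by a free action: given a finite group $C$ and a free $C$-set $\Omega$ with $|\Omega|\ge 2$, let $C$ permute the petals of the rose $R_\Omega$, fixing the central vertex. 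This acts faithfully on $\pi_1(R_\Omega)=\free_{|\Omega|}$ by permuting a basis, giving $C\hookrightarrow\Aut(\free_{|\Omega|})\hookrightarrow\Comm(\free)$ via the embedding quoted in the introduction. Equivalently, $C$ is the canonical finite subgroup of the virtually free group $\free_{|\Omega|}\rtimes C\cong C\ast\free_k$, where $k=|\Omega|/|C|$ is the number of orbits. Taking $\Omega=C$ (the regular action) realizes every finite group, which already gives the universality half of Hall's characterization.

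The engine is a homogeneity lemma: any two free-action realizations of a finite group are conjugate in $\Comm(\free)$, by a conjugator inducing any prescribed isomorphism of the groups. Granting it, the chain is built by induction. Having $\phi_n\colon G_n\cong A_n$, consider the regular realization of $G_{n+1}$ on $R_{G_{n+1}}$; restricting it to $\iota_n(G_n)$ gives a free realization of $G_n$ with $|G_{n+1}|/|G_n|$ orbits. Both this realization and $A_n$ are, up to conjugacy, free realizations of $G_n$, so the lemma supplies $w\in\Comm(\free)$ conjugating the former onto $A_n$ and inducing $\phi_n$. Setting $A_{n+1}$ to be the $w$-conjugate of the regular realization of $G_{n+1}$, with $\phi_{n+1}$ the correspondingly conjugated isomorphism, one verifies $A_n\le A_{n+1}$ and $\phi_{n+1}\circ\iota_n=\phi_n$. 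This is the geometric shadow of Hall's argument, in which the conjugations live inside a single symmetric group; here the symmetric groups are realized on distinct roses, and the lemma is precisely what glues consecutive levels into an honest chain.

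The main obstacle is the homogeneity lemma, where the geometry of free groups must replace Hall's purely combinatorial fact that a free permutation representation is determined up to equivalence by its number of orbits. Two free realizations of $C$ present $C$ as the canonical finite subgroup of $C\ast\free_k$ and of $C\ast\free_{k'}$, and I would prove these commensurably conjugate by passing to finite-index subgroups that retain a single copy of $C$. Concretely, choose a transitive action of $C\ast\free_k$ on a finite set in which $C$ has exactly one fixed point and is otherwise free (possible once the degree is $\equiv 1\bmod|C|$); the stabilizer of the fixed point is a finite-index subgroup containing that copy of $C$ and, by the Kurosh subgroup theorem, isomorphic to $C\ast\free_m$ with a single torsion factor. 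Matching the ranks $m$ on the two sides---achievable after passing deeper if necessary---produces isomorphic finite-index subgroups $C\ast\free_m$ identifying the two copies of $C$, twistable by any automorphism of $C$; restricting this identification to the free normal parts gives the intertwining commensuration. Throughout, ranks must be kept at least $2$, so that every group that appears is nonabelian free with abstract commensurator $\Comm(\free)$; this is arranged by choosing $|\Omega|$ and the covering degrees large. The remaining work is the direct-limit bookkeeping and the verification that the single-vertex covers and the rank-matching can be carried out coherently.
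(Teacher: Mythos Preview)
Your derivation of the Corollary from the Main Theorem is exactly the paper's (implicit) argument. Your sketch of the Main Theorem, however, takes a genuinely different route. The paper works through the solenoid model $\HMod(\sol\Gamma,\ast)\cong\Comm(\free)$: for each level it builds an explicit $G_k$-equivariant map $R(G_{k+1})\to R(G_k)$ factoring as a homotopy equivalence followed by a finite cover (the extra petal labeled $\ast$ is precisely what makes the covering degree $[S(G):G]$ come out right), and then invokes \cref{solenoid-lemma} to promote these to homotopy equivalences of solenoids and hence to a coherent system of embeddings into a single $\HMod(\sol{R_0},\ast)$. You instead argue directly in $\Comm(\free)$ via a homogeneity lemma---any two ``free on a basis'' realizations of a finite group $C$ are conjugate---proved with Kurosh by passing to finite-index subgroups of $C\ast\free_k$ containing the given $C$ and matching ranks. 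This avoids the solenoid machinery entirely and mirrors Hall's own conjugacy-driven construction of $U$ more closely; the paper's approach, by contrast, is constructive and exhibits each intertwining commensuration concretely rather than producing it from a uniqueness principle. Two small points: the rank-matching step (``achievable after passing deeper if necessary'') does need an argument---the ranks reachable from $C\ast\free_k$ by a subgroup of index $d\equiv 1\pmod{|C|}$ containing $C$ form the progression $k+t(k|C|-1)$, and a short B\'ezout computation is required to see that two such progressions always meet---and your choice $G_0=\Z/2$ gives $G_1=\operatorname{Sym}(\Z/2)\cong\Z/2$, so the tower never grows; take $|G_0|\ge 3$.
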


The proof of the main theorem uses the realization of the abstract commensurator $\Comm(\free)$ as the homotopy equivalence group of a {\em full solenoid} $\sol{\Gamma}$, the inverse limit of all finite-sheeted covers of any finite graph $\Gamma$ with nonabelian fundamental group~\cite{bering-studenmund}.
Hall's universal group can be constructed as the direct limit of a sequence finite permutation groups $G_k$. To prove the main theorem, each $G_k$ is realized as a group of homotopy equivalences of a finite graph $\Gamma_k$, and $G_k$-equivariant graph morphisms $\Gamma_{k+1} \to \Gamma_k$ are constructed to induce homotopy equivalences $\sol{\Gamma_{k+1}}\to \sol{\Gamma_k}$. These identify each $G_k$ as a group of homotopy equivalences of $\sol{\Gamma_0}$, from which the theorem follows.

The group $\Comm(\free)$ has been studied for some time, but relatively little is known about its structure. For example, it is not known whether $\Comm(\free)$ is simple. A'Campo and Burger noted that $\Comm(\free)$ is not linear~\cite{acampo-burger}, and  Bartholdi and Bogopolski  proved $\Comm(\free)$ is not finitely generated~\cite{bartholdi-bogopolski}. Macedo\'{n}ska, Nekrashevych, and Sushchansky showed that the group of bireversible automatic permutations over any finite alphabet can be identified as a subgroup of $\Comm(\free)$ \cite{mns}.  Bou-Rabee and the second author describe a homomorphic image of the Baumslag-Solitar group $BS(2,3)$ in $\Comm(\free)$ that is not residually finite~\cite{bou-rabee-studenmund}.

Free groups and their related structures have been fruitfully studied by analogy with fundamental groups of closed surfaces of genus at least two. We note there are similarly few structural results known about the abstract commensurator $\Comm(\pi_1(\Sigma))$, where $\Sigma$ is a closed surface of genus at least two. $\Comm( \pi_1(\Sigma))$ is known to not be finitely generated~\cite{bartholdi-bogopolski} and not to be linear over any field~\cite{bou-rabee-studenmund}. However any finite subgroup of $\Comm(\pi_1(\Sigma))$ has a cyclic subgroup of index at most 2~\cite{bou-rabee-studenmund}*{Proposition 4}, so Hall's universal group is not a subgroup of $\Comm(\pi_1(\Sigma))$.

\section{The solenoid model of commensurations}

A \emph{graph} is a 1-dimensional CW-complex. We will refer to the 0-cells as vertices and 1-cells as edges.  

\begin{definition}
    Let $(\Gamma, \ast)$ be a pointed finite graph. The \emph{full solenoid} over $\Gamma$, denoted $\sol{\Gamma}$, is the pointed topological space obtained as the inverse limit of the system of all pointed finite-sheeted covers of $(\Gamma, \ast)$
    \[ (\sol{\Gamma},\ast) = \varprojlim_{\Lambda \stackrel{k:1}{\to} \Gamma} (\Lambda, \ast_\Lambda) \]
\end{definition}

Given a pointed topological space $(X,\ast)$ let $\HMod(X,\ast)$ be the group of homotopy classes of pointed homotopy equivalences of $X$. In our previous work we proved that the homotopy equivalences of a full solenoid over a finite aspherical CW-complex model the abstract commensurator of a fundamental group~\cite{bering-studenmund}. Specialized to finite graphs, this model gives the following isomorphism.

\begin{theorem}[\cite{bering-studenmund}*{Corollary 3}]\label{solenoid-theorem}
    If $(\Gamma, \ast)$ is a finite graph with more edges than vertices, then
    \[ \HMod(\sol{\Gamma},\ast) \cong \Comm(\free) \]
\end{theorem}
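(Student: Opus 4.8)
The plan is to build mutually inverse maps between $\Comm(\free)$ and $\HMod(\sol{\Gamma},\ast)$ by passing back and forth through the tower of finite covers, using covering space theory to convert the algebra of commensurations into the topology of the solenoid.

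First I would fix the dictionary. Taking $\Gamma$ connected, the hypothesis that it has more edges than vertices gives $\chi(\Gamma)<0$, so $\pi_1(\Gamma,\ast)=\free$ is free of rank at least two. By the Galois correspondence of covering space theory, pointed finite-sheeted covers $(\Lambda,\ast_\Lambda)\to(\Gamma,\ast)$ correspond contravariantly to finite-index subgroups $H=p_*\pi_1(\Lambda)\leq\free$, with pointed covering maps $\Lambda'\to\Lambda$ corresponding to inclusions $H'\leq H$. Because graphs are aspherical, pointed homotopy classes of pointed maps between covers are in natural bijection with homomorphisms of their fundamental groups, and such a map is a homotopy equivalence exactly when it induces an isomorphism. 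I would also record the cofinality observation that for any finite cover $\Lambda_H$ the finite covers of $\Gamma$ factoring through $\Lambda_H$ are cofinal in all finite covers (intersect the corresponding subgroups), so there is a canonical pointed identification $\sol{\Gamma}\cong\sol{\Lambda_H}$.

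Next I would construct $\Phi\colon\Comm(\free)\to\HMod(\sol{\Gamma},\ast)$. Represent a commensuration by an isomorphism $\phi\colon H\to K$ of finite-index subgroups and realize it by a pointed homotopy equivalence $g\colon\Lambda_H\to\Lambda_K$ of the corresponding covers. Since $\phi=g_*$ is an isomorphism, it induces a bijection of finite-index subgroups of $H$ with those of $K$, hence an isomorphism of the two inverse systems of finite covers compatible with the covering projections; passing to inverse limits yields a pointed homotopy equivalence $\sol{g}\colon\sol{\Lambda_H}\to\sol{\Lambda_K}$. Composing with the canonical identifications $\sol{\Gamma}\cong\sol{\Lambda_H}$ and $\sol{\Lambda_K}\cong\sol{\Gamma}$ gives $\Phi(\phi)$. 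I would then check that $\Phi$ is well defined (two representatives agreeing on a finite-index subgroup $L$ induce maps that agree on the cofinal subsystem below $\Lambda_L$, hence are homotopic on the limit) and multiplicative (functoriality of the $\sol{(-)}$ construction on homotopy equivalences of base graphs).

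The main obstacle is the inverse direction, recovering a commensuration from an arbitrary $f\in\HMod(\sol{\Gamma},\ast)$. Here I would prove a continuity lemma: for each finite graph $\Lambda$ the composite $\pi_\Lambda\circ f\colon\sol{\Gamma}\to\Lambda$ is homotopic, rel basepoint, to a map factoring through some finite-stage projection $\pi_{\Lambda'}$. This is a shape-theoretic statement — $\sol{\Gamma}$ is a compact inverse limit and finite graphs are aspherical ANRs, so every pointed map to $\Lambda$ factors through a finite stage up to homotopy and $[\sol{\Gamma},\Lambda]_\ast\cong\varinjlim_{\Lambda'}\mathrm{Hom}(\pi_1\Lambda',\pi_1\Lambda)$ — and it is the technical crux. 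Choosing such factorizations compatibly across the tower produces a pro-endomorphism of the system $\{\pi_1\Lambda'\}$, and applying the same analysis to a homotopy inverse of $f$ forces these finite-stage maps to be isomorphisms between finite-index subgroups, that is, a genuine commensuration $\Psi(f)\in\Comm(\free)$. Two further points need care: the solenoid is not path connected, so the pointed structure must be used throughout to stay in the path component of $\ast$; and one must verify that $\Psi$ is a well-defined homomorphism inverse to $\Phi$. Granting the continuity lemma, $\Phi$ and $\Psi$ are mutually inverse, establishing the isomorphism $\HMod(\sol{\Gamma},\ast)\cong\Comm(\free)$.
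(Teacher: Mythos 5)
This statement is not proved in the paper under review at all: it is imported verbatim from the companion work \cite{bering-studenmund}*{Corollary 3}, so your attempt must be measured against that source and against the toolkit this paper displays in its proof of Lemma~\ref{solenoid-lemma}. With that caveat, your overall architecture is faithful to the cited proof's strategy. Your forward map $\Phi$ is exactly the mechanism of Lemma~\ref{solenoid-lemma}: realize a commensuration $\phi\colon H\to K$ by a pointed homotopy equivalence of finite covers, use the cofinality observation to identify $\sol{\Gamma}\cong\sol{\Lambda_H}$, and pass to inverse limits. Your ``continuity lemma'' for the inverse direction --- that a pointed map from the compact cofiltered limit $\sol{\Gamma}$ to the aspherical ANR $\Lambda$ factors through a finite stage up to pointed homotopy, so that $[\sol{\Gamma},\Lambda]_\ast\cong\varinjlim_{\Lambda'}\mathrm{Hom}(\pi_1\Lambda',\pi_1\Lambda)$ --- is a true classical fact (Marde\v{s}i\'c--Segal), and you are right that it is the technical crux; extracting a genuine commensuration by playing $f$ against its homotopy inverse is also sound.

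The genuine gap is the closing sentence, ``granting the continuity lemma, $\Phi$ and $\Psi$ are mutually inverse.'' The direction $\Psi\circ\Phi=\mathrm{id}$ is formal, but $\Phi(\Psi(f))\simeq f$ requires showing that a pointed map \emph{into} the solenoid is determined up to pointed homotopy by its finite-stage projections up to homotopy, and in general the natural map $[X,\varprojlim\Lambda_i]_\ast\to\varprojlim\,[X,\Lambda_i]_\ast$ is neither injective nor surjective (Milnor $\varprojlim^1$/phantom-map phenomena), so homotopies cannot simply be assembled stage by stage; the same error lurks in your well-definedness claim that maps ``agreeing on a cofinal subsystem'' are ``homotopic on the limit.'' The repair uses structure your proposal never invokes and which this paper's Lemma~\ref{solenoid-lemma} showcases: by \cite{mccord} each projection $\sol{\Gamma}\to\Lambda'$ is a fiber bundle with totally disconnected fibers, hence has \emph{unique} homotopy lifting. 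Lifting a basepointed homotopy $\pi_{\Lambda'}\circ f\simeq\pi_{\Lambda'}\circ\sol{g}$ starting at $f$ (total disconnectedness of the fiber forces the lifted track of the basepoint to be constant) replaces $f$ by a map $f'$ covering the same map to $\Lambda'$ and agreeing with $\sol{g}$ at $\ast$; unique path lifting then gives $f'=\sol{g}$ on the path component of $\ast$, and since that leaf is dense in $\sol{\Gamma}$ and the target is Hausdorff, $f'=\sol{g}$ everywhere, whence $f\simeq\sol{g}$. You correctly flagged that the solenoid is not path connected, but the density of the base leaf is the positive use of that structure your sketch is missing; without it (or some equivalent rigidity input) the final step of your argument does not close.
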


Any finite graph $\Gamma$ with more edges than vertices can be used to construct the solenoid in \cref{solenoid-theorem}. The next lemma provides a way to explicitly describe the different isomorphisms coming from \cref{solenoid-theorem}.

\begin{lemma} \label{solenoid-lemma}
    Let $\Gamma_1$ and $\Gamma_2$ be finite graphs and suppose $\phi:(\Gamma_1, v_1) \to (\Gamma_2, v_2)$ is either a homotopy equivalence or a finite-sheeted covering map. Then $\phi$ induces a pointed homotopy equivalence of solenoids $\sol \phi : (\sol{\Gamma_1},\ast) \to (\sol{\Gamma_2},\star)$.
\end{lemma}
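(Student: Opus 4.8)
The plan is to build $\sol\phi$ from a morphism of the two inverse systems of finite covers and then verify it is a homotopy equivalence, treating the covering and homotopy-equivalence hypotheses separately after a common first step. For the common step I would record the functoriality of the solenoid: given any pointed map $\phi$ and any pointed finite cover $q : M \to (\Gamma_2, v_2)$, forming the fiber product $\Gamma_1 \times_{\Gamma_2} M$ and passing to the component of the basepoint produces a pointed finite cover $\phi^{*}M \to (\Gamma_1, v_1)$ together with a lift $\hat\phi_M : \phi^{*}M \to M$ covering $\phi$. Because $(\psi\phi)^{*}M = \phi^{*}(\psi^{*}M)$ for composable pointed maps, these data are compatible with passing to finer covers and with composition, so they assemble into a continuous pointed map $\sol\phi : (\sol{\Gamma_1}, \ast) \to (\sol{\Gamma_2}, \star)$ satisfying $\sol{(\psi\phi)} = \sol\psi \circ \sol\phi$ and $\sol{\id} = \id$. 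Continuity and basepoint preservation are automatic from the universal property of the inverse limit together with compactness of the solenoids.

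In the covering case, $\phi_{*}$ identifies $G_1 = \pi_1(\Gamma_1, v_1)$ with a finite-index subgroup of $G_2 = \pi_1(\Gamma_2, v_2)$, and I would argue that the two inverse systems share a cofinal subsystem. Every finite-index subgroup $H \le G_1$ is again finite-index in $G_2$, and conversely $K \cap G_1$ is finite-index in $G_1$ for each finite-index $K \le G_2$; hence the finite-index subgroups of $G_1$ are cofinal among those of $G_2$. For such an $H$ the pointed cover of $\Gamma_2$ with fundamental group $H$ factors as $\Lambda_H \to \Gamma_1 \xrightarrow{\phi} \Gamma_2$, identifying it with the pointed cover of $\Gamma_1$ with fundamental group $H$ and matching the bonding maps. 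Since an inverse limit is computed by any cofinal subsystem, the two towers have a common cofinal subtower; the induced $\sol\phi$ is therefore a pointed homeomorphism, which is in particular a homotopy equivalence.

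In the homotopy-equivalence case $\phi_{*} : G_1 \to G_2$ is an isomorphism, so pulling back a pointed cover with fundamental group $H$ returns the pointed cover with fundamental group $\phi_{*}^{-1}(H)$, and the lift $\hat\phi_M$ induces the isomorphism $\phi_{*}^{-1}(H) \to H$ on $\pi_1$; as graphs are aspherical, Whitehead's theorem makes each $\hat\phi_M$ a homotopy equivalence. To conclude that $\sol\phi$ itself is a homotopy equivalence I would choose a pointed homotopy inverse $\psi$ of $\phi$ and invoke functoriality: $\sol\psi \circ \sol\phi = \sol{(\psi\phi)}$ and $\sol\phi \circ \sol\psi = \sol{(\phi\psi)}$, so it suffices to know that $\psi\phi \simeq \id$ and $\phi\psi \simeq \id$ induce maps homotopic to the identity on the relevant solenoids.

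The main obstacle is exactly this homotopy invariance, since being a homotopy equivalence is not preserved by $\varprojlim$ for free. I would establish it directly from the homotopy lifting property rather than from the level-wise equivalences. Given a pointed homotopy $F : \Gamma_1 \times I \to \Gamma_2$ from $\phi$ to $\phi'$, the induced maps $(F_t)_{*}$ on $\pi_1$ are independent of $t$, so each pointed finite cover $N \to \Gamma_2$ pulls back to a single cover $M = \phi^{*}N$ and the homotopy lifts to $\hat F_t : M \to N$ starting at $\hat\phi_M$. Uniqueness of lifts makes these homotopies compatible across refinements of $N$, so they assemble into a pointed homotopy $\sol{\Gamma_1} \times I \to \sol{\Gamma_2}$ from $\sol\phi$ to $\sol{\phi'}$; the only point needing care is joint continuity in the thread and in $t$, which follows from compactness of the solenoid and uniformity of the lifts. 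Applying this to $\psi\phi \simeq \id$ and $\phi\psi \simeq \id$ shows $\sol\psi$ is a homotopy inverse of $\sol\phi$, completing the proof. The remaining routine verifications---connectivity of basepoint components of pullbacks and compatibility of lifts with bonding maps---are standard covering-space theory for graphs.
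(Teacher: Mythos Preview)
Your argument is correct and follows the same overall plan as the paper: construct $\sol\phi$ by pulling back finite covers along $\phi$, treat the covering case via cofinality of inverse systems, and treat the homotopy-equivalence case by proving homotopy invariance of $\phi \mapsto \sol\phi$. The substantive difference is in this last step. The paper quotes the fact (from \cite{bering-studenmund} and \cite{mccord}) that the projection $\rho_1 : \sol{\Gamma_1} \to \Gamma_1$ is a fiber bundle with totally disconnected fibers, hence has \emph{unique} homotopy lifting; the homotopy $\psi\phi \simeq \id_{\Gamma_1}$ is then lifted through $\rho_1$ in one stroke, and connectedness of $\sol{\Gamma_1}$ together with uniqueness forces the terminal lift to be $\id_{\sol{\Gamma_1}}$. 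You instead lift the homotopy through each finite cover $N \to \Gamma_2$ using only the ordinary covering-space homotopy lifting property, check compatibility via uniqueness of pointed lifts, and assemble the resulting family into a map $\sol{\Gamma_1}\times I \to \sol{\Gamma_2}$ by the universal property of the inverse limit. Your route is more elementary and self-contained, avoiding the cited fiber-bundle result; the paper's route is shorter once that result is available. Two minor remarks: your appeal to ``compactness of the solenoid and uniformity of the lifts'' for joint continuity is unnecessary---continuity into an inverse limit is automatic once each coordinate map $\sol{\Gamma_1}\times I \to N$ is continuous, which it is; and your claim that $\sol\phi$ is defined for ``any pointed map $\phi$'' tacitly needs $\phi_*$ to be injective with finite-index image so that the pullbacks are \emph{finite} covers, a hypothesis satisfied in both cases of the lemma but not in general.
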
 
\begin{proof}
    First, in either case, $\phi$ induces a map of solenoids as follows. Note that $\phi_\ast : \pi_1(\Gamma_1, \ast)\to\pi_1(\Gamma_2,\ast)$ is injective and has finite-index image. Thus, each pointed finite-sheeted cover $(\Lambda,\ast) \to (\Gamma_2,\ast)$ determines a
    finite-sheeted cover $q_\Lambda : (\Gamma_\Lambda,\ast)\to (\Gamma_1,\ast)$ where $\pi_1(\Gamma_\Lambda,\ast) = \phi_\ast^{-1}(\pi_1(\Lambda, \ast))$. By construction the
    composition $\phi\circ q_\Lambda$ lifts uniquely to a map $\phi_\Lambda : (\Gamma_\Lambda, \ast) \to (\Lambda, \ast)$. For each finite-sheeted cover $(\Gamma_\Lambda, \ast) \to \Gamma_1$ there is a projection map $\rho_{\Gamma_\Lambda} : \sol{\Gamma_1} \to \Gamma_\Lambda$. The induced map $\sol{\phi} : \sol{\Gamma_1} \to \sol{\Gamma_2}$ is
    the inverse limit of the set of maps $\{ \phi_\Lambda\circ \rho_{\Gamma_\Lambda} \}$ indexed over the finite-sheeted covers of $\Gamma_2$. Moreover, $\sol{\phi}$ covers $\phi$, in the sense that $\rho_{\Gamma_2} \circ \sol{\phi} = \phi \circ \rho_{\Gamma_1}$.

    Now, suppose $\phi$ is a finite-sheeted covering map. In this case, via the lifting construction, we see $\phi$ embeds the system of pointed finite-sheeted covers of $(\Gamma_1, v_1)$ as a cofinal subsystem of the system of pointed finite-sheeted covers of $(\Gamma_2, v_2)$. It is a standard fact about inverse limits that in this case $\phi$ induces a homeomorphism $\sol\phi : \sol{\Gamma_1} \to \sol{\Gamma_2}$.
    
    Finally, suppose $\phi$ is a homotopy equivalence with homotopy inverse $\psi$. The projection $\rho_1 : \sol{\Gamma_1} \to \Gamma_1$ is a fiber bundle with totally disconnected fibers~\citelist{\cite{bering-studenmund}*{Theorem 1} \cite{mccord}*{Theorem 5.6}}, and therefore $\rho_1$ has unique homotopy lifting~\cite{spanier}*{\S2.2}. The composition of induced maps $\sol{\psi}\circ \sol{\phi}$ covers the composition $\psi\circ\phi$. By lifting the homotopy $\psi \circ \phi\sim \id_{\Gamma_1}$ we see $\sol{\psi}\circ\sol{\phi}\sim \id_{\sol{\Gamma_1}}$. Symmetrically, $\sol{\phi}\circ\sol{\psi} \sim \id_{\sol{\Gamma_2}}$, and we are done.
\end{proof}

\section{Proof of the main theorem}

For any finite group $G$, let $U(G)$ be the underlying set. Given any set $S$, let $R(S)$ be the graph (treated as a CW complex) with one vertex $v$ and an edge $e_s$ for each $s\in S$. For a group $G$, fix a disjoint point $\ast$ and let $R(G) = R(U(G)\sqcup \{\ast\})$. There is an action $G\curvearrowright U(G)$ defined by $g\cdot h = gh$, which induces an action by graph automorphisms $G\curvearrowright R(G)$. 

Given any group $G$, let $S(G)$ be the group of permutations of the set $U(G)$. The left-multiplication action induces an injective homomorphism $\ell:G\to S(G)$.

\begin{lemma} \label{construction-lemma}
    For any finite group $G$, there is a continuous function $\phi: R(S(G)) \to R(G)$ such that 
    \begin{enumerate}
    \item $\phi$ is the composition of a homotopy equivalence and a finite-sheeted covering map, and
    \item $\phi$ is $G$-equivariant, where $G$ acts on $R(S(G))$ via the inclusion $\ell:G\to S(G)$.
    \end{enumerate}
\end{lemma}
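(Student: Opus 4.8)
The plan is to produce $\phi$ as a composition $\phi = p \circ h$, where $h \colon R(S(G)) \to \Delta$ is a $G$-equivariant homotopy equivalence onto an auxiliary finite graph $\Delta$ and $p \colon \Delta \to R(G)$ is a $G$-equivariant finite-sheeted covering map; then both factors are $G$-equivariant, so $\phi$ is as well, and it is by construction of the required form. This is the only viable order: since $R(S(G))$ has a single vertex, any covering map out of it is a homeomorphism, so the covering factor must come second. Because $h$ is a homotopy equivalence, $\Delta$ must be connected with $\pi_1(\Delta)$ free of rank $|U(S(G))| + 1 = |G|! + 1$. Comparing Euler characteristics, a connected degree-$n$ cover of $R(G)$ has rank $1 + n|G|$, so the covering degree is forced to be $n = (|G|-1)! = |S(G)|/|G|$.

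Next I would build $\Delta$ as the cyclic cover of $R(G)$ determined by the homomorphism $\pi_1(R(G)) = \free_{|G|+1} \to \Z/n\Z$ sending the generator corresponding to $e_\ast$ to $1$ and every generator corresponding to an edge $e_h$ (for $h \in U(G)$) to $0$. The crucial observation is that this homomorphism is constant on $G$-orbits of edges: the action $G \curvearrowright R(G)$ permutes the $e_h$ among themselves and fixes $e_\ast$. Hence its kernel is $G$-invariant, the action lifts to $G \curvearrowright \Delta$ with $p$ equivariant, and, in fact, $G$ acts trivially on the fiber $p^{-1}(v) \cong \Z/n\Z$ over the vertex $v$: each fiber point is the endpoint of the lift of a power $e_\ast^k$, and $G$ fixes $e_\ast$.

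Finally I would collapse a spanning tree. The preimage $p^{-1}(e_\ast)$ is a single $n$-cycle through all of $p^{-1}(v)$, since $e_\ast \mapsto 1$ generates $\Z/n\Z$; as $G$ fixes both $e_\ast$ and the fiber pointwise, this cycle is fixed pointwise by $G$, so deleting one of its edges yields a spanning tree $T$ on which $G$ acts trivially. The quotient $\Delta/T$ is then a rose consisting of the $|G|!$ loops lying over the various $e_h$ (each $e_h$ lifts to $n$ loops, as its monodromy is trivial) together with one remaining loop over $e_\ast$. As a $G$-set the loops over the $e_h$ are $U(G) \times (\Z/n\Z)$ with $G$ acting by left multiplication on the first factor, i.e.\ $n = (|G|-1)!$ copies of the regular $G$-set, which is $G$-equivariantly isomorphic to $U(S(G))$ under $\ell$; together with the $G$-fixed loop over $e_\ast$ this gives a $G$-isomorphism $\Delta/T \cong R(S(G))$. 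Since $G$ fixes $T$ pointwise, the collapse $\Delta \to \Delta/T$ admits a $G$-equivariant homotopy inverse (send the basepoint to a fixed vertex and each edge-loop to its unique representative path through $T$), and composing with the isomorphism above produces the desired $h$.

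I expect the main obstacle to be equivariance rather than homotopy type: a cover of the correct rank need not admit a compatible $G$-action, and a spanning tree need not be $G$-invariant, so the tree collapse need not be equivariant. Both difficulties are dissolved by the specific choice of cyclic cover, which is engineered precisely so that $G$ acts trivially on the fiber. This triviality is what makes the spanning tree $G$-fixed (hence the collapse equivariant) and forces the free $G$-orbits of loops over the $e_h$ to assemble into exactly the regular-representation pattern matching $U(S(G))$.
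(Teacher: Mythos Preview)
Your proposal is correct and follows essentially the same route as the paper: the paper also takes the cyclic degree-$n$ cover $\Gamma \to R(G)$ determined by $a_\ast \mapsto 1$, $a_g \mapsto 0$ in $\Z/n\Z$ with $n=[S(G):G]$, observes that the $G$-action lifts and fixes the vertices, collapses the $G$-fixed spanning tree consisting of all but one lift of $e_\ast$, and then identifies the resulting rose with $R(S(G))$ (the paper does this last step via a choice of right coset representatives $c_0,\dots,c_{n-1}$ for $\ell(G)$ in $S(G)$, which is exactly your identification of the $G$-set $U(G)\times(\Z/n\Z)$ with $U(S(G))$).
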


\begin{proof}
    Identify $\pi_1(R(G), v)$ with the free group $\free(A(G))$ on the generating set $A(G) = \left\{ a_x \mid x\in U(G) \sqcup \{\ast\} \right\}$, via the map that sends $a_x$ to the image of the edge $e_x$ in $R(G)$. Let $n = [S(G) : G]$. Let $p : \Gamma \to R(G)$ be the covering space corresponding to the kernel of the map $\free(A(G)) \to \Z/n\Z$ defined by $a_g\mapsto 0$ for $g\in G$ and $a_\ast \mapsto 1$. For each $0\leq k < n$, let $S_k \subset \free(A(G))$ be
    \[
    S_k = \left\{ a_\ast^k a_g a_\ast^{-k} \mid g\in G \right\}.
    \]
    Choosing a base vertex $v_0\in \Gamma$, identify $\pi_1(\Gamma, v_0) \leq \free(A(G))$ with the subgroup generated by $\left( \bigcup_{k=0}^{n-1} S_k \right) \cup \{ a_\ast^n\}$ via path lifting. 
    
    The induced action $G\curvearrowright \pi_1(R(G), v)$ permutes the generators of $\pi_1(\Gamma, v_0)$, hence the action of $G$ on $R(G)$ lifts to an action on $\Gamma$. This action admits a concrete description, which we use to label the edges: The action of the deck group has a single cyclically ordered orbit of vertices $v_0, v_1, \dotsc, v_{n-1}$ in $\Gamma$. There is a single edge connecting $v_i$ to $v_{i+1}$ indexed modulo $n$, which we label $a_\ast^{i+1}$.
    At each $v_k$, there is a lift $R_k$ of the subgraph $R(U(G))$; the edges of this lift are naturally identified with the set $S_k$, and labeled by the same. Then $G$ acts on $\Gamma$ fixing the vertex set, permuting the edges of $R_k$ by $g \cdot a_*^k a_h a_*^{-k} = a^k a_{gh} a_*^{-k}$, and fixing all $n$ other edges.
    
    Let $f : \Gamma \to R$ be a homotopy equivalence collapsing the maximal subtree with edges labeled $\{a_\ast^i\}_{i=1}^{n-1}$. The graph $R$ has one vertex and the edge set consists of $\left| S(G) \right|$ loops labeled by $\left( \bigcup_{k=0}^{n-1} S_k \right)$ and a single loop labeled $a_\ast^n$. The action $G\curvearrowright \Gamma$ fixes every collapsed edge, hence descends to an action $G\curvearrowright R$ permuting each edge set $S_k$ as above, and fixing the edge $a_\ast^n$.
    Let $g : R \to \Gamma$ be a $G$-equivariant homotopy inverse of $f$
    
    Identify $R$ with $R(S(G))$ as follows. Choose right coset representatives $c_0, \dotsc, c_{n-1}$ for $\ell(G) \leq S(G)$. Define a graph isomorphism $\psi : R\to R(S(G))$ identifying the unique vertex in each graph, identifying the edge labeled $a_{\ast}^n$ with $\ast$, and mapping the edge labeled $a_*^k h a_*^{-k}$ to the edge labeled by $h c_k$. By construction, $\psi$ is $G$-equivariant. It follows that the composition $p \circ g \circ \psi^{-1} : R(S(G)) \to R(G)$ is the desired continuous function.
\end{proof}

\begin{figure}[hbt]
\begin{center}
\begin{tikzpicture}[vertex/.style={circle, draw, fill=black,
                        inner sep=0pt, minimum width=4pt},
                    every node/.style = {font=\footnotesize},
                    edge/.style={distance=1.5cm},
                    semithick]
    \node[vertex] (G) {};
    
    \draw      (G) to [in=  5, out= 85, loop, edge] node[auto] {$0$} (G);
    \draw      (G) to [in= 95, out=175, loop, edge] node[auto] {$1$} (G);
    \draw      (G) to [in=185, out=265, loop, edge] node[auto] {$2$} (G);
    \draw[red] (G) to [in=275, out=355, loop, edge] node[auto] {$\ast$} (G);
    
    \draw[->] (2.5,0) to node[auto] {$\rho$} (1.3,0);
    \draw[->] (5,-1.5) to node[auto] {$\psi\circ f$} (5,-3);
    \draw[->] (3.4,-3.4) to node[auto] {$\phi$} (1.1, -1.1);
    
    \begin{scope}[xshift=4cm,node distance=2cm]
    \node[vertex, label=left:{$v_0$}] (V0) {};
    \node[vertex, right of=V0, label=right:{$v_1$}] (V1) {};
    
    \draw[red] (V0) to [bend right] node[below] {$a_\ast$} (V1);
    \draw[red] (V0) to [bend left] node[auto] {$a_\ast^2$} (V1);
    
    \draw (V0) to [in= 50, out=130, loop, edge] node[auto] {$a_0$} (V0);
    \draw (V0) to [in=140, out=220, loop, edge] node[auto] {$a_1$} (V0);
    \draw (V0) to [in=230, out=310, loop, edge] node[auto] {$a_2$} (V0);
    
    \draw (V1) to [in=230, out=310, loop, edge] node[auto] {$a_\ast a_0a_\ast^{-1}$} (V1);
    \draw (V1) to [in=310, out= 40, loop, edge] node[auto] {$a_\ast a_1a_\ast^{-1}$} (V1);
    \draw (V1) to [in= 50, out=130, loop, edge] node[auto] {$a_\ast a_2a_\ast^{-1}$} (V1);
    \end{scope}
    
    \begin{scope}[xshift=5cm,yshift=-5cm, edge/.style={distance=2cm}]
    \node[vertex] (SG) {};
    
    \draw[red] (SG) to [in= 75, out=115, loop, edge] node[auto] {$a_\ast^2 = \ast$} (SG);
    \draw      (SG) to [in=130, out=170, loop, edge] node[auto] {$()$} (SG);
    \draw      (SG) to [in=180, out=220, loop, edge] node[auto] {$(012)$} (SG);
    \draw      (SG) to [in=230, out=270, loop, edge] node[auto] {$(021)$} (SG);
    \draw      (SG) to [in=280, out=320, loop, edge] node[auto] {$()(01)$} (SG);
    \draw      (SG) to [in=330, out= 10, loop, edge] node[auto] {$(012)(01)$} (SG);
    \draw      (SG) to [in= 20, out= 60, loop, edge] node[auto] {$(021)(01)$} (SG);
    
    \end{scope}
    
\end{tikzpicture} \end{center}
\caption{An illustration of \cref{construction-lemma} applied to $G = \mathbb{Z}/3\mathbb{Z}$. Elements
of $S(G) = S_3$ are listed in cycle notation, so that $\ell(1) = (012)$. }
\end{figure}
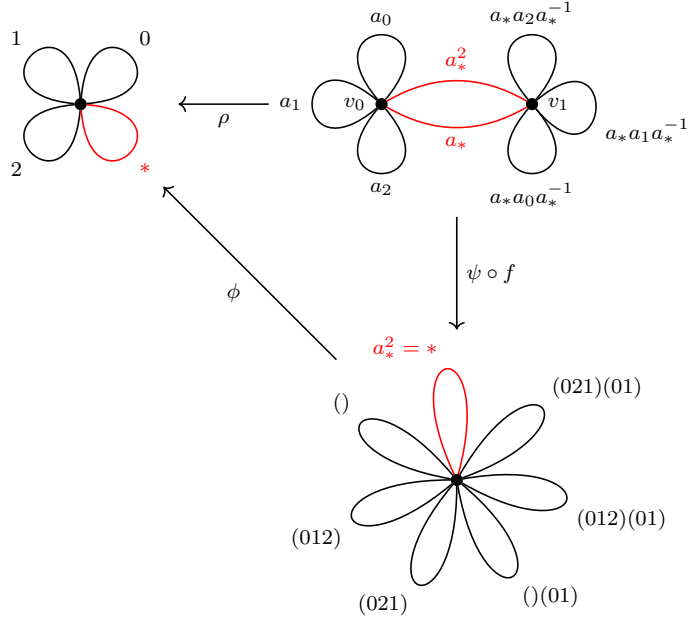

\begin{proof}[Proof of the main theorem]
We now prove that Hall's universal group $U$ is a subgroup of $\Comm(\free)$. Hall's universal group may be constructed as follows~\cite{hall-universal}*{\S1.2}: Fix any finite group $G_0$ with at least 3 elements and recursively define $G_{k+1} = S(G_k)$ for all $k\geq 0$. These form a directed sequence of groups under the left-multiplication maps $\ell_k : G_k \to G_{k+1}$. Hall's universal group $U$ is the colimit of this sequence.

Let $(G_k, \ell_k)$ be any directed sequence of groups as above. Let $R_0$ be the rose with two petals, and fix any covering space $p_0 : \Gamma_0 \to R_0$ such that $\pi_1(\Gamma_0)$ is free on $\left| G \right| + 1$ generators. Fix a homotopy equivalence $f_0 : R(G_0) \to \Gamma_0$. Define $i_0 = p_0 \circ f_0$. By Lemma \ref{solenoid-lemma}, $i_0$ induces a homotopy equivalence $\sol i_0 : \sol{R(G_0)} \to \sol{R_0}$.

For each $k$, let $\phi_k: R(G_k) \to R(G_{k-1})$ be a map satisfying the conditions of Lemma \ref{construction-lemma}. Then $\phi_k$ induces a pointed homotopy equivalence $\sol{R(G_k)} \to \sol{R(G_{k-1})}$, and therefore an isomorphism $\Phi_k : \HMod( \sol{R(G_k)}, \ast ) \to \HMod( \sol{R(G_{k-1})}, \ast )$.

Recursively define maps $i_k : R(G_k) \to R_0$ by $i_k = i_{k-1} \circ \phi_{k}^{-1}$. Each $i_k$ is a composition of coverings and homotopy equivalences by Lemma \ref{construction-lemma}, so induces a pointed homotopy equivalence $\sol{i_k} : \sol{R(G_k)} \to \sol{R_0}$ by Lemma  \ref{solenoid-lemma}.  Each action $G_k \curvearrowright R(G_k)$ induces an injective group morphism $G_k \to \HMod( \sol{R(G_k)}, \ast )$. Since each map $\phi_k$ is $G_k$-equivariant with respect to the left multiplication inclusion $\ell_k$, for each $k$ we have a commuting diagram
    \begin{center}
    \begin{tikzcd}
    G_k \ar{rr}{\ell_k} \ar[swap]{d} & & G_{k+1} \ar{d} \\
    \HMod( \sol{R(G_k)}, \ast ) \ar{dr} & & \HMod( \sol{R(G_{k+1})}, \ast ) \ar[swap]{ll}{\Phi_{k+1}} \ar{dl} \\
     & \HMod ( \sol{R_0}, \ast )
    \end{tikzcd}
    \end{center}

It follows that there is a map from the colimit of the directed sequence $(G_k, \ell_k)$, which is isomorphic to Hall's universal group $U$, to $\HMod ( \sol{R_0}, \ast )$. Because the vertical arrows are injective, this map is injective. We conclude $U < \Comm(\free)$ by \cref{solenoid-theorem}.
\end{proof}

\begin{bibdiv}
\begin{biblist}
\bib{acampo-burger}{article}{
  author={A'Campo, Norbert},
  author={Burger, Marc},
  title={R\'{e}seaux arithm\'{e}tiques et commensurateur d'apr\`es G. A. Margulis},
  language={French},
  journal={Invent. Math.},
  volume={116},
  date={1994},
  number={1-3},
  pages={1--25},
  issn={0020-9910},
  review={\MR {1253187}},
  doi={10.1007/BF01231555},
}

\bib{bartholdi-bogopolski}{article}{
  author={Bartholdi, L.},
  author={Bogopolski, O.},
  title={On abstract commensurators of groups},
  journal={J. Group Theory},
  volume={13},
  date={2010},
  number={6},
  pages={903--922},
  issn={1433-5883},
  review={\MR {2736164}},
  doi={10.1515/JGT.2010.021},
}

\bib{bering-studenmund}{article}{
  author={Bering, E. A., IV},
  author={Studenmund, D.},
  title={Topological Models of Abstract Commensurators},
  status={preprint},
  eprint={https://arxiv.org/abs/2108.10586},
}

\bib{bou-rabee-studenmund}{article}{
  author={Bou-Rabee, Khalid},
  author={Studenmund, Daniel},
  title={Abstract commensurators of surface groups},
  journal={J. Topol. Anal.},
  volume={13},
  date={2021},
  number={3},
  pages={607--622},
  issn={1793-5253},
  review={\MR {4312448}},
  doi={10.1142/S1793525320500235},
}

\bib{hall-universal}{article}{
  author={Hall, P.},
  title={Some constructions for locally finite groups},
  journal={J. London Math. Soc.},
  volume={34},
  date={1959},
  pages={305--319},
  issn={0024-6107},
  review={\MR {162845}},
  doi={10.1112/jlms/s1-34.3.305},
}

\bib{mns}{article}{
  author={Macedo\'{n}ska, O.},
  author={Nekrashevych, V.},
  author={Sushchansky, V.},
  title={Commensurators of groups and reversible automata},
  language={English, with Ukrainian summary},
  journal={Dopov. Nats. Akad. Nauk Ukr. Mat. Prirodozn. Tekh. Nauki},
  date={2000},
  number={12},
  pages={36--39},
  issn={1025-6415},
  review={\MR {1841119}},
}

\bib{mccord}{article}{
  author={McCord, M. C.},
  title={Inverse limit sequences with covering maps},
  journal={Trans. Amer. Math. Soc.},
  volume={114},
  date={1965},
  pages={197--209},
  issn={0002-9947},
  review={\MR {173237}},
  doi={10.2307/1993997},
}

\bib{spanier}{book}{
  author={Spanier, Edwin H.},
  title={Algebraic topology},
  publisher={McGraw-Hill Book Co., New York-Toronto, Ont.-London},
  date={1966},
  pages={xiv+528},
  review={\MR {0210112}},
}

\end{biblist}
\end{bibdiv}

\end{document}